\newtheorem{theorem}{Theorem}[section]
\newtheorem{remark}{Remark}[section]
\newtheorem{example}{Example}[section]
\newtheorem{lemma}{Lemma}[section]
\newtheorem{corollary}{Corollary}[section]
\numberwithin {equation}{section}
\newenvironment{proof}{\textbf{Proof.}} {\hfill $\Box$}
\begin{document}
\title{\bf Further results for the Dunkl Transform and the generalized Ces\`{a}ro operator}
\author{Chokri Abdelkefi and Faten Rached
 \footnote{\small This work was completed with the support of the DGRST research project LR11ES11 and the program CMCU 10G /
1503.}\\ \small  Department of Mathematics, Preparatory
Institute of Engineer Studies of Tunis  \\ \small 1089 Monfleury Tunis, University of Tunis, Tunisia\\
  \small E-mail : chokri.abdelkefi@ipeit.rnu.tn \\  \small E-mail : rached@math.jussieu.fr}%
\date{}
\maketitle
\begin{abstract}
In this paper, we consider Dunkl theory on $\mathbb{R}^d$
 associated to a finite reflection group. This theory generalizes classical Fourier
analysis. First, we give for $1<p\leq2$, sufficient conditions for
weighted $L^p$-estimates of the Dunkl transform of a function $f$
using respectively the modulus of continuity of $f$ in the radial
case and the convolution for $f$ in the general case. In particular,
we obtain as application, the integrability of this transform on
Besov-Lipschitz spaces. Second, we provide necessary and sufficient
conditions on nonnegative functions $\varphi$ defined on $[0,1]$ to
ensure the boundedness of the generalized Ces\`{a}ro operator
$\mathcal{C_\varphi}$ on Herz spaces and we obtain the corresponding
operator norm inequalities.
\end{abstract}
{\small \bf Keywords :}{\small Dunkl operators, Dunkl transform,
Dunkl translations, Dunkl
convolution, Besov-Lipschitz spaces, Herz spaces, Ces\`{a}ro operators.}\\
\noindent {\small \bf Mathematics Subject Classification (2000):}
{Primary 42B10, 46E30, Secondary 44A35.}
\section{Introduction }
\par Dunkl theory generalizes classical Fourier analysis on
$\mathbb{R}^d$. It started twenty years ago with Dunkl's seminal
work [9] and was further developed by several mathematicians (see
[8, 11, 14, 16, 18]) and later was applied and generalized in
different ways by many authors (see [3, 4, 5, 17]). A key tool in
the study of special functions with reflection symmetries are Dunkl
operators. These are commuting differential-difference operators
$T_i, 1 \leq i \leq d$ associated to an arbitrary finite reflection
group $W$ on an Euclidean space and to a non negative multiplicity
function $k$. The Dunkl kernel $E_k$ has been introduced by C.F.
Dunkl in [10]. This kernel is used to define the Dunkl transform
$\mathcal{F}_k$. K. Trim\`eche has introduced in [18] the Dunkl
translation operators $\tau_x$,
 $x\in \mathbb{R}^d$, on the space of infinitely
differentiable functions on $\mathbb{R}^d$.  At the moment an
explicit formula for the Dunkl translation $\tau_x(f)$ of a function
$f$ is unknown in general. However, such formula is known when the
function $f$ is radial (see next section). In particular, the
boundedness of $\tau_x$ is established in this case. As a result one
obtains a formula for the convolution $\ast_k$. In the case
$k\equiv0$, the $T_i$ reduce to the corresponding partial
derivatives $\frac{\partial}{\partial x_i}$. Therefore Dunkl
analysis can be viewed as a generalization of classical Fourier
analysis (see next section, Remark 2.1). An important motivation to
study Dunkl operators originates in their relevance for the analysis
of quantum many body systems of Calogero-Moser-Sutherland type.
These describe algebraically integrable systems in one dimension and
have gained considerable interest in mathematical
physics (see [19]).\\

Let f be a function in $L^p_k(\mathbb{R}^d)$, $1\leq p<\infty$ where
$L^p_k(\mathbb{R}^d)$ denote the space $L^{p}(\mathbb{R}^d,
w_k(x)dx)$ with $w_k$ the weight function associated to the Dunkl
operators given by
$$w_k(x) = \prod_{\xi\in R_+} |\langle \xi,x\rangle|^{2k(\xi)},
\quad x \in \mathbb{R}^d \quad(\mbox{see next section}).$$ The
modulus of continuity $\omega_{p,k}(f)$ of first order of a radial
function $f$ in $L^p_k(\mathbb{R}^d)$ is defined by
$$\omega_{p,k}(f)(x)=\displaystyle\sup_{0<t\leq
x}\int_{S^{d-1}}\|\tau_{tu}(f)-f\|_{p,k}d\sigma(u),\;\;x>0,$$ where
 $S^{d-1}$ is the unit sphere on $\mathbb{R}^d$ with
the normalized surface measure $d\sigma$.\\We use $\|\ \;\|_{p,k}$
as a shorthand for $\|\ \;\|_{L^p_k( \mathbb{R}^d)}$.\\We set for
$f\in L^p_k(\mathbb{R}^d)$,
\begin{eqnarray*}  \tilde{\omega}_{p,k}(f)(x)=\sup_{0<t\leq
x} \|f \ast_k \phi_t\|_{p,k},\;\;x>0,\end{eqnarray*}where $\phi$ is
a radial function in $\mathcal{S}(\mathbb{R}^d)$ satisfying
\begin{eqnarray*} \exists\,\;c>0\,; \;\,|\mathcal{F}_k(\phi)(y)| >
c\,,\;\,\forall\,\;y\in\Big\{z\in \mathbb{R}^d\,:\;\frac{1}{2}<
\|z\|<1\Big\}.\end{eqnarray*} $\phi_t$ being the dilation of $\phi$
given by
$\phi_t(y)=\frac{1}{t^{2(\gamma+\frac{d}{2})}}\phi(\frac{y}{t})$,
 for all $t\in (0,+\infty)$ and $y\in\mathbb{R}^d$. Note that
 $\mathcal{F}_k(\phi_t)(y)=\mathcal{F}_k(\phi)(ty)$.

Obviously, $\omega_{p,k}(f)(x)$ and $\tilde{\omega}_{p,k}(f)(x)$ are nondecreasing in $x$.\\

For $ \beta>0$, $1\leq p < + \infty$, we define the weighted
Besov-Lipschitz spaces (see [7] for the classical case) denoted by $
\mathcal{B}\mathcal{D}_{p}^{\beta,k} $ as the subspace of radial
functions f in $L^p_k(\mathbb{R}^d)$ satisfying\begin{eqnarray*}
\displaystyle\sup_{x>0}\frac{\omega_{p,k}(f)(x)}{x^{\beta}}<+\infty.\end{eqnarray*}

Let $\varphi$ a nonnegative function defined on $[0,1]$. For a
measurable complex-valued function $f$ on $\mathbb{R}^d$, we define
the generalized Ces\`{a}ro operator $\mathcal{C_\varphi}$ by
   \begin{eqnarray*}\mathcal{C_\varphi}f(x)=\int_0^1 f(\frac{x}{t})\,t^{-(2\gamma+d)}\varphi (t) dt,\;\;x\in
   \mathbb{R}^d,
  \end{eqnarray*}where $\displaystyle{\gamma=\sum_{\xi \in R_+} k(\xi)}$ with $R_+$ a fixed positive root
  system (see next section). If $k\equiv0$, $\varphi \equiv 1$ and $d=1$, we
obtain the classical Ces\`{a}ro average operator $\mathcal{C}$ given
by $$\mathcal{C}f(x) =
\left\{\begin{array}{lll}\int_x^{+\infty} \frac{f(y)}{y} dy, \;&&\mbox{if}\; x >0\,\\&&\\
-\int_{-\infty}^x \frac{f(y)}{y} dy,\; &&\mbox{if}\;
x<0.\end{array}\right.$$

We define for $\beta>0$ and $1 \leq p,q<+\infty $, the homogeneous
Herz-type space for the Dunkl operators $H_{p,q}^{\beta,k}$ (see
[12] for the classical case) by the space of functions $f $ in
$L^p_k(\mathbb{R}^d)_{ \textbf{loc}}$
 satisfying $$   \Big(\sum_{j=-\infty}^{+\infty}(2^{j\beta }\|  f \chi_j \|_{p,k})^q \Big)^{\frac{1}{q}}<+\infty,$$
 where $\chi_j$ is the characteristic function of the set $$ A_j=\{x\in
\mathbb{R}^d\,;\,2^{j-1}\leq \|x\|\leq 2^{j}\}\;\;\mbox{for}\;\;
 j\in\mathbb{Z},$$ and
$L^p_k(\mathbb{R}^d)_{\textbf{loc}}$ is the space $L^{p}_{
\textbf{loc}}(\mathbb{R}^d, w_k(x)dx).$ \\

In this paper, first we give for $1<p\leq2$, sufficient conditions
for weighted $L^p$-estimates of the Dunkl transform
$\mathcal{F}_k(f)$ of a function $f$ using respectively the modulus
of continuity $\omega_{p,k}(f)$ of $f$ in the radial case and the
convolution $\tilde{\omega}_{p,k}(f)$ for $f$ in the general case.
In particular, we obtain as application, the integrability of this
transform on the Besov-Lipschitz spaces
$\mathcal{B}\mathcal{D}_{p}^{\beta,k}$. This is an extension to the
Dunkl transform on higher dimension of the results obtained by F.
M\'{o}ricz in [13] for the Fourier transform on the real line.
Second, we provide necessary and sufficient conditions for
$\mathcal{C_\varphi}$ to be bounded on the Herz spaces $
H_{p,q}^{\beta,k}$ when $\beta>0$, $1< p <+\infty$, $1\leq
q<+\infty$ and we obtain the corresponding
operator norm inequalities.\\

The contents of this paper are as follows: \\In section 2, we
collect some basic definitions and results about harmonic analysis
associated with Dunkl operators.\\
In section 3, we give sufficient conditions for weighted
$L^p$-estimates of the Dunkl transform of function which we apply on
the Besov-Lipschitz spaces.\\In section 4, we provide necessary and
sufficient conditions for
$\mathcal{C_\varphi}$ to be bounded on Herz spaces.\\

Along this paper we denote by $\langle .,.\rangle$ the usual
Euclidean inner product in $\mathbb{R}^d$ as well as its extension
to $\mathbb{C}^d \times \mathbb{C}^d$, we write for $x \in
\mathbb{R}^d, \|x\| = \sqrt{\langle x,x\rangle}$ and we use $c$ to
denote a suitable positive constant which is not necessarily the
same in each occurrence. Furthermore, we denote by

$\bullet\quad \mathcal{E}(\mathbb{R}^d)$ the space of infinitely
differentiable functions on $\mathbb{R}^d$.

$\bullet\quad \mathcal{S}(\mathbb{R}^d)$ the Schwartz space of
functions in $\mathcal{E}( \mathbb{R}^d)$ which are rapidly
decreasing as well as their derivatives.

$\bullet\quad \mathcal{D}(\mathbb{R}^d)$ the subspace of
$\mathcal{E}(\mathbb{R}^d)$ of compactly supported functions.
\section{Preliminaries}
 $ $ In this section, we recall some notations and results in Dunkl
theory and we refer for more details to the surveys [15].\\

Let $W$ be a finite reflection group on $\mathbb{R}^{d}$, associated
with a root system $R$. For $\alpha\in R$, we denote by
$\mathbb{H}_\alpha$ the hyperplane orthogonal to $\alpha$. Given
$\beta\in\mathbb{R}^d\backslash\bigcup_{\alpha\in R}
\mathbb{H}_\alpha$, we fix a positive subsystem $R_+=\{\alpha\in R:
\langle \alpha,\beta\rangle>0\}$. We denote by $k$ a nonnegative
multiplicity function defined on $R$ with the property that $k$ is
$W$-invariant. We associate with $k$ the index
$$\gamma = \sum_{\xi \in R_+} k(\xi) \geq 0,$$
and and the weight function $w_k$ given by
\begin{eqnarray*} w_k(x) = \prod_{\xi\in R_+} |\langle
\xi,x\rangle|^{2k(\xi)}, \quad x \in \mathbb{R}^d.\end{eqnarray*}
$w_k$ is $W$-invariant and homogeneous of degree $2\gamma$.\\

Further, we introduce the Mehta-type constant $c_k$ by
$$c_k = \left(\int_{\mathbb{R}^d} e^{- \frac{\|x\|^2}{2}}
w_k (x)dx\right)^{-1}.$$

For every $1 \leq p \leq + \infty$, we denote by
$L^p_k(\mathbb{R}^d)$, the space $L^{p}(\mathbb{R}^d, w_k (x)dx)$
and we use $\|\ \;\|_{p,k}$
as a shorthand for $\|\ \;\|_{L^p_k( \mathbb{R}^d)}$. \\

By using the homogeneity of degree $2\gamma$ of $w_k$, it is shown
in [14] that for a radial function $f$ in $L^1_k ( \mathbb{R}^d)$,
there exists a function $F$ on $[0, + \infty)$ such that $f(x) =
F(\|x\|)$, for all $x \in \mathbb{R}^d$. The function $F$ is
integrable with respect to the measure $r^{2\gamma+d-1}dr$ on $[0, +
\infty)$ and we have
 \begin{eqnarray} \int_{\mathbb{R}^d}  f(x)\,d\nu_k(x)&=&\int^{+\infty}_0
\Big( \int_{S^{d-1}}f(ry)w_k(ry)d\sigma(y)\Big)r^{d-1}dr\nonumber\\
&=&
 \int^{+\infty}_0
\Big( \int_{S^{d-1}}w_k(ry)d\sigma(y)\Big)
F(r)r^{d-1}dr\nonumber\\&= & d_k\int^{+ \infty}_0 F(r)
r^{2\gamma+d-1}dr,
\end{eqnarray}
  where $S^{d-1}$
is the unit sphere on $\mathbb{R}^d$ with the normalized surface
measure $d\sigma$  and \begin{eqnarray}d_k=\int_{S^{d-1}}w_k
(x)d\sigma(x) = \frac{c^{-1}_k}{2^{\gamma +\frac{d}{2} -1}
\Gamma(\gamma + \frac{d}{2})}\;.  \end{eqnarray}

The Dunkl operators $T_j\,,\ \ 1\leq j\leq d\,$, on $\mathbb{R}^d$
associated with the reflection group $W$ and the multiplicity
function $k$ are the first-order differential- difference operators
given by
$$T_jf(x)=\frac{\partial f}{\partial x_j}(x)+\sum_{\alpha\in R_+}k(\alpha)
\alpha_j\,\frac{f(x)-f(\rho_\alpha(x))}{\langle\alpha,x\rangle}\,,\quad
f\in\mathcal{E}(\mathbb{R}^d)\,,\quad x\in\mathbb{R}^d\,,$$ where
$\rho_\alpha$ is the reflection on the hyperplane
$\mathbb{H}_\alpha$ and $\alpha_j=\langle\alpha,e_j\rangle,$
$(e_1,\ldots,e_d)$ being the canonical basis of $\mathbb{R}^d$.
\begin{remark}In the case $k\equiv0$, the weighted function $w_k\equiv1$ and the measure associated to the
Dunkl operators coincide with the Lebesgue measure. The $T_j$ reduce
to the corresponding partial derivatives. Therefore Dunkl analysis
can be viewed as a generalization of classical Fourier
analysis.\end{remark}

For $y \in \mathbb{C}^d$, the system
$$\left\{\begin{array}{lll}T_ju(x,y)&=&y_j\,u(x,y),\qquad1\leq j\leq d\,,\\  &&\\
u(0,y)&=&1\,.\end{array}\right.$$ admits a unique analytic solution
on $\mathbb{R}^d$, denoted by $E_k(x,y)$ and called the Dunkl
kernel. This kernel has a unique holomorphic extension to
$\mathbb{C}^d \times \mathbb{C}^d $. We have for all $\lambda\in
\mathbb{C}$ and $z, z'\in \mathbb{C}^d,\;
 E_k(z,z') = E_k(z',z)$,  $E_k(\lambda z,z') = E_k(z,\lambda z')$ and for $x, y
\in \mathbb{R}^d,\;|E_k(x,iy)| \leq 1$.\\

The Dunkl transform $\mathcal{F}_k$ is defined for $f \in
\mathcal{D}( \mathbb{R}^d)$ by
$$\mathcal{F}_k(f)(x) =c_k\int_{\mathbb{R}^d}f(y) E_k(-ix, y)w_k(y)dy,\quad
x \in \mathbb{R}^d.$$  We list some known properties of this
transform:
\begin{itemize}
\item[i)] The Dunkl transform of a function $f
\in L^1_k( \mathbb{R}^d)$ has the following basic property
\begin{eqnarray*}\| \mathcal{F}_k(f)\|_{\infty,k} \leq
 \|f\|_{ 1,k}\;. \end{eqnarray*}
\item[ii)] The Dunkl transform is an automorphism on the Schwartz space $\mathcal{S}(\mathbb{R}^d)$.
\item[iii)] When both $f$ and $\mathcal{F}_k(f)$ are in $L^1_k( \mathbb{R}^d)$,
 we have the inversion formula \begin{eqnarray*} f(x) =   \int_{\mathbb{R}^d}\mathcal{F}_k(f)(y) E_k( ix, y)w_k(y)dy,\quad
x \in \mathbb{R}^d.\end{eqnarray*}
\item[iv)] (Plancherel's theorem) The Dunkl transform on $\mathcal{S}(\mathbb{R}^d)$
 extends uniquely to an isometric automorphism on
$L^2_k(\mathbb{R}^d)$.
\end{itemize} Since the Dunkl transform $\mathcal{F}_k(f)$ is of strong-type $(1,\infty)$ and
$(2,2)$, then by interpolation, we get for $f \in
L^p_k(\mathbb{R}^d)$ with $1\leq p\leq 2$ and $p'$ such that
$\frac{1}{p}+\frac{1}{p'}=1$, the Hausdorff-Young inequality
\begin{eqnarray*}
\|\mathcal{F}_k(f)\|_{p',k}\leq c\,\|f\|_{p,k}.
\end{eqnarray*}
The Dunkl transform of a radial function in $L^1_k( \mathbb{R}^d)$
 is also radial and could be expressed via the Hankel transform. More precisely, according to ([14], proposition
2.4), we have the following results:
\begin{eqnarray}\int_{S^{d-1}}E_k(ix,y)w_k(y)d\sigma(y) =
d_k\, j_{\gamma + \frac{d}{2}-1}(\|x\|), \end{eqnarray}
 and
 \begin{eqnarray}\mathcal{F}_k(f)(x) &=&\int^{+\infty}_0
\Big( \int_{S^{d-1}}E_k(-ix, y)w_k(y)d\sigma(y)\Big)
F(r)r^{2\gamma+d-1}dr\nonumber\\&=&  c_k^{-1}\; \mathcal{H}_{\gamma
+ \frac{d}{2}-1} (F)(\|x\|),\quad x \in \mathbb{R}^d, \end{eqnarray}
where $F$ is the function defined on $[ 0, + \infty)$ by $F(\|x\|) =
f(x),\; x \in \mathbb{R}^d$, $ \mathcal{H}_{\gamma + \frac{d}{2}-1}$
is the Hankel transform of order $\gamma + \frac{d}{2}-1$ and
$j_{\gamma + \frac{d}{2}-1}$  the normalized Bessel function of the
first kind and order $\gamma + \frac{d}{2}-1$.

The integral representation of $j_{\alpha}$, $\alpha>-\frac{1}{2}$
is given by
\begin{eqnarray}
j_{\alpha}(\lambda x)=\frac{2\Gamma(\alpha
+1)}{\sqrt{\pi}\Gamma(\alpha+\frac{1}{2})}\int^{1}_{0}(1-t^{2})^{\alpha-\frac{1}{2}}\cos(\lambda
x t)dt\quad \text{for}\, x\in\mathbb{R}\;\text{and}\;
\lambda\in\mathbb{C}.
\end{eqnarray}
We note that
\begin{eqnarray}
\frac{2\Gamma(\alpha
+1)}{\sqrt{\pi}\Gamma(\alpha+\frac{1}{2})}\int^{1}_{0}(1-t^{2})^{\alpha-\frac{1}{2}}dt=1.
\end{eqnarray} \\

K. Trim\`eche has introduced in [18] the Dunkl translation operators
$\tau_x$, $x\in\mathbb{R}^d$, on $\mathcal{E}( \mathbb{R}^d)\;.$ For
$f\in \mathcal{S}( \mathbb{R}^d)$ and $x, y\in\mathbb{R}^d$, we have
\begin{eqnarray}\mathcal{F}_k(\tau_x(f))(y)=E_k(i x, y)\mathcal{F}_k(f)(y).\end{eqnarray}  Notice that for all $x,y\in\mathbb{R}^d$,
$\tau_x(f)(y)=\tau_y(f)(x)$ and for fixed $x\in\mathbb{R}^d$
\begin{eqnarray*}\tau_x \mbox{\; is a continuous linear mapping from \;}
\mathcal{E}( \mathbb{R}^d) \mbox{\;
into\;}\mathcal{E}(\mathbb{R}^d)\,.\end{eqnarray*} As an operator on
$L_k^2(\mathbb{R}^d)$, $\tau_x$ is bounded. A priori it is not at
all clear whether the translation operator can be defined for $L^p$-
functions with $p$ different from 2. However, according to ([17],
Theorem 3.7), the operator $\tau_x$ can be extended to the space of
radial functions in $L^p_k(\mathbb{R}^d),$ $1 \leq p \leq 2$ and we
have for a radial function $f \in L^p_k(\mathbb{R}^d)$
\begin{eqnarray}\|\tau_x(f)\|_{p,k} \leq \|f\|_{p,k}.\end{eqnarray}

The Dunkl convolution product $\ast_k$ of two functions $f$ and $g$
in $L^2_k(\mathbb{R}^d)$ is given by
$$(f\; \ast_k g)(x) = \int_{\mathbb{R}^d} \tau_x (f)(-y) g(y) w_k(y)dy,\quad
x \in \mathbb{R}^d .$$ The Dunkl convolution product is commutative
and for $f,\,g \in \mathcal{D}( \mathbb{R}^d)$, we have
\begin{eqnarray}\mathcal{F}_k(f\,\ast_k\, g) =
\mathcal{F}_k(f) \mathcal{F}_k(g).\end{eqnarray}It was shown in
([17], Theorem 4.1) that when $g$ is a bounded radial function in
$L^1_k( \mathbb{R}^d)$, then
\begin{eqnarray*}(f\; \ast_k g)(x) = \int_{\mathbb{R}^d}  f(y) \tau_x (g)(-y) w_k(y)dy,\quad
x \in \mathbb{R}^d , \end{eqnarray*} initially defined on the
intersection of $L^1_k(\mathbb{R}^d)$ and $L^2_k(\mathbb{R}^d)$
extends to $L^p_k(\mathbb{R}^d)$, $1\leq p\leq +\infty$ as a bounded
operator. In particular, \begin{eqnarray}\|f \ast_k g\|_{p,k} \leq
\|f\|_{p,k} \|g\|_{1,k}\,.\end{eqnarray}
\section{Weighted $L^p$-estimates for the Dunkl transform with sufficient conditions}
In this section, we give sufficient conditions for weighted
$L^p$-estimates of the Dunkl transform of function which we apply on
the Besov-Lipschitz space.\\

Throughout this section, we denote by $p'$ the conjugate
$\frac{p}{p-1}$ of $p$ for $1<p\leq 2$. According to (2.8) and
(2.10), we recall that :\\$\bullet$ The modulus of continuity
$\omega_{p,k}(f)$ of first order of a radial function $f$ in
$L^p_k(\mathbb{R}^d)$ is defined by
$$\omega_{p,k}(f)(x)=\displaystyle\sup_{0<t\leq
x}\int_{S^{d-1}}\|\tau_{tu}(f)-f\|_{p,k}d\sigma(u),\;\;x>0.$$ \\
$\bullet$ We set for $f\in L^p_k(\mathbb{R}^d)$,
\begin{eqnarray*}  \tilde{\omega}_{p,k}(f)(x)=\sup_{0<t\leq
x} \|f \ast_k \phi_t\|_{p,k},\;\;x>0,\end{eqnarray*}where $\phi$ is
a radial function in $\mathcal{S}(\mathbb{R}^d)$ satisfying
\begin{eqnarray*} \exists\,\;c>0\,; \;\,|\mathcal{F}_k(\phi)(y)| >
c\,,\;\,\forall\,\;y\in\Big\{z\in \mathbb{R}^d\,:\;\frac{1}{2}<
\|z\|<1\Big\}.\end{eqnarray*} $\phi_t$ being the dilation of $\phi$
given by
$\phi_t(y)=\frac{1}{t^{2(\gamma+\frac{d}{2})}}\phi(\frac{y}{t})$,
 for all $t\in (0,+\infty)$ and $y\in\mathbb{R}^d$.\\

For $\theta\geq 1$, we introduce a class of nonnegative
$w_k$-locally integrable radial functions on
$\displaystyle\{x\in\mathbb{R}^d: \|x\|\geq1\}$ which we denote by
$G_\theta$. We said that a function $g$ belongs to the class
$G_\theta$ if there exists a constant $\kappa_{\theta}\geq 1$ such
that for $\eta=1,2,...$
\begin{eqnarray}
\left(\int_{1\leq\|z\|\leq2}g(2^\eta z)^{\theta}
w_{k}(z)dz\right)^{\frac{1}{\theta}}\leq \kappa_{\theta}
\int_{\frac{1}{2}\leq\|z\|\leq1}g(2^\eta z)  w_{k}(z)dz.
\end{eqnarray}
 If we set
\begin{eqnarray*}
C_{\eta}=\displaystyle\{y\in\mathbb{R}^d:
2^{\eta}\leq\|y\|<2^{\eta+1}\}, \quad\mbox{ for}\; \eta=0,1,2,...,
\end{eqnarray*} then using (2.1) and the change of variables $y=2^\eta
z$, we can write (3.1) in the form
\begin{eqnarray}
\left(\int_{C_{\eta}}g(y)^{\theta}
w_{k}(y)dy\right)^{\frac{1}{\theta}}\leq
\kappa_{\theta}\;2^{\eta(\frac{1-\theta}{\theta})(2\gamma+d)}\int_{C_{\eta-1}}g(
y) w_{k}(y)dy.
\end{eqnarray}
\begin{example} If $g\equiv const$, then $g\in G_\theta$ where we choose $$\displaystyle\kappa_{\theta}\geq
2^{2\gamma+d}\Big(\frac{2^{2\gamma+d}-1}{2\gamma+d}
\Big)^{\frac{1}{\theta}-1}.$$
\end{example}
\begin{theorem}
Let $1<p\leq 2$ and $f$ be a radial function in $
L^{p}_{k}(\mathbb{R}^{d})$. Then for $1\leq q\leq p^{\prime}$ and
$g\in G_{\frac{p}{p-qp+q}}$, we have
\begin{eqnarray*}
\int_{\|y\|\geq 2}g(y)|\mathcal{F}_k(f)(y)|^{q}w_{k}(y)dy\leq C
\int_{\|y\|\geq1}g(y)\|y\|^{-\frac{q}{p^{\prime}}(2\gamma+d)}\omega_{p,k}^{q}(f)(\frac{\pi}{\|y\|})w_{k}(y)dy,
\end{eqnarray*}
where $C$ is a constant depending only on $p$ and $q$.
\end{theorem}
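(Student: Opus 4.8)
The plan is to reduce the estimate to a dyadic sum over the annuli $C_\eta$ and, on each annulus, to separate the two factors by H\"older's inequality while controlling $\mathcal{F}_k(f)$ through the modulus of continuity by means of the Bessel representation (2.3). First I would write $\{\|y\|\geq 2\}=\bigcup_{\eta\geq 1}C_\eta$ and treat each piece $\int_{C_\eta}g\,|\mathcal{F}_k(f)|^{q}w_k$ separately. Since $1\leq q\leq p'$, H\"older with conjugate exponents $p'/q$ and $\theta=\frac{p'}{p'-q}=\frac{p}{p-qp+q}$ gives
$$\int_{C_\eta}g\,|\mathcal{F}_k(f)|^{q}w_k\leq\Big(\int_{C_\eta}|\mathcal{F}_k(f)|^{p'}w_k\Big)^{q/p'}\Big(\int_{C_\eta}g^{\theta}w_k\Big)^{1/\theta},$$
the case $q=p'$ being the limiting supremum version with $\theta=\infty$. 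The hypothesis $g\in G_\theta$, used in the form (3.2), then replaces the last factor by $\kappa_\theta\,2^{\eta\frac{1-\theta}{\theta}(2\gamma+d)}\int_{C_{\eta-1}}g\,w_k$, and one checks that $\frac{1-\theta}{\theta}(2\gamma+d)=-\frac{q}{p'}(2\gamma+d)$, precisely the power of $\|y\|$ appearing on the right-hand side.

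The core of the argument is the annulus bound $\big(\int_{C_\eta}|\mathcal{F}_k(f)|^{p'}w_k\big)^{1/p'}\leq C\,\omega_{p,k}(f)(\pi 2^{-\eta})$. To obtain it I average the translates against the weight on the sphere, setting $g_t=\frac{1}{d_k}\int_{S^{d-1}}\tau_{tu}(f)\,w_k(u)\,d\sigma(u)$, which is legitimate for radial $f$ by the boundedness (2.8). Combining the symmetry $E_k(itu,y)=E_k(ity,u)$ (from $E_k(\lambda z,z')=E_k(z,\lambda z')$ and $E_k(z,z')=E_k(z',z)$) with (2.3) and the intertwining relation (2.7), one gets $\mathcal{F}_k(g_t-f)(y)=\big(j_{\gamma+\frac d2-1}(t\|y\|)-1\big)\mathcal{F}_k(f)(y)$. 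Since $\frac{1}{d_k}\int_{S^{d-1}}w_k\,d\sigma=1$ by (2.2), Minkowski's integral inequality, the boundedness of $w_k$ on $S^{d-1}$, and the monotonicity of the modulus give $\|g_t-f\|_{p,k}\leq C\,\omega_{p,k}(f)(\pi 2^{-\eta})$ for every $t\leq \pi 2^{-\eta}$; the Hausdorff--Young inequality then yields
$$\Big(\int_{C_\eta}\big|j_{\gamma+\frac d2-1}(t\|y\|)-1\big|^{p'}|\mathcal{F}_k(f)|^{p'}w_k\Big)^{1/p'}\leq C\,\omega_{p,k}(f)(\pi 2^{-\eta}).$$

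To remove the Bessel factor I integrate $t$ over a band $[a2^{-\eta},b2^{-\eta}]$ with $0<2a<b\leq\pi$ and apply Fubini. For $y\in C_\eta$ the substitution $s=t\|y\|$ shows $\int_{a2^{-\eta}}^{b2^{-\eta}}\big|j_{\gamma+\frac d2-1}(t\|y\|)-1\big|^{p'}dt\geq \|y\|^{-1}\int_{2a}^{b}\big|j_{\gamma+\frac d2-1}(s)-1\big|^{p'}ds$, and this last integral is a strictly positive constant because $j_{\gamma+\frac d2-1}(s)<1$ for $0<s\leq\pi$ (its first critical point is the first zero of $J_{\gamma+\frac d2}$, which is at least $\pi$). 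As $\|y\|\simeq 2^\eta$ on $C_\eta$, the factors $2^{\pm\eta}$ cancel and the core bound follows. Inserting it into the first display, summing over $\eta\geq 1$, using that $\omega_{p,k}(f)$ is nondecreasing so that $\omega_{p,k}(f)(\pi 2^{-\eta})\leq\omega_{p,k}(f)(\pi/\|y\|)$ and $2^{-\eta\frac{q}{p'}(2\gamma+d)}\leq\|y\|^{-\frac{q}{p'}(2\gamma+d)}$ on $C_{\eta-1}$, and finally reindexing $\eta-1\mapsto\eta$, reconstitutes the integral $\int_{\|y\|\geq1}g\,\|y\|^{-\frac{q}{p'}(2\gamma+d)}\omega_{p,k}^q(f)(\pi/\|y\|)w_k$.

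I expect the genuinely delicate points to be two. First, producing the Bessel function $j_{\gamma+\frac d2-1}$ out of a spherical average of $E_k$ forces the use of the \emph{weighted} average and hence a comparison between the weighted modulus arising from Minkowski's inequality and the unweighted modulus $\omega_{p,k}(f)$ in the statement, which is handled through the boundedness of $w_k$ on $S^{d-1}$ (absorbing a constant depending on the fixed Dunkl data into $C$). Second, the uniform lower bound $\int_{2a}^{b}\big|j_{\gamma+\frac d2-1}(s)-1\big|^{p'}ds>0$ is exactly where the constant $\pi$ enters, and it rests on locating the first maximum of the normalized Bessel function; everything else is the routine dyadic and H\"older bookkeeping described above.
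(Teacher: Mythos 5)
Your argument is correct, and its skeleton coincides with the paper's: dyadic decomposition over the annuli $C_\eta$, H\"older with exponents $p'/q$ and $\theta=\frac{p'}{p'-q}=\frac{p}{p-qp+q}$, the $G_\theta$ hypothesis in the form (3.2) (whose exponent is indeed $-\frac{q}{p'}(2\gamma+d)$), the core bound $\bigl(\int_{C_\eta}|\mathcal{F}_k f|^{p'}w_k\bigr)^{1/p'}\le C\,\omega_{p,k}(f)(\pi 2^{-\eta})$ (the paper's (3.6)), then monotonicity and reindexing. Where you genuinely diverge is in how that core bound is produced. The paper works with each translate $\tau_{xu}f-f$ separately: Hausdorff--Young at fixed $u$, the pointwise spherical H\"older estimate $d_k|j_{\gamma+\frac{d}{2}-1}(rx)-1|\le d_k^{1/p}\bigl(\int_{S^{d-1}}|E_k(ixru,z)-1|^{p'}w_k\,d\sigma\bigr)^{1/p'}$ from (2.2)--(2.3), and integration in $u$ to create $\omega_{p,k}(f)$; you instead average the translates into the weighted spherical mean $g_t$, whose multiplier is exactly $j_{\gamma+\frac{d}{2}-1}(t\|\cdot\|)$, and apply Hausdorff--Young once after Minkowski --- an equivalent but tidier mechanism. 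To remove the Bessel factor, the paper fixes $x=\pi 2^{-\eta-1}$ and gets an explicit pointwise lower bound on $|j(rx)-1|$ from the representation (2.5)--(2.6) via $\sin^2(\frac{rxt}{2})\ge\sin^2(\frac{\pi}{8})$, whereas you average over a band of dilations and invoke positivity of $\int_{2a}^{b}|j-1|^{p'}$; your substitution bookkeeping on $C_\eta$ is right, and the strict inequality $j_{\gamma+\frac{d}{2}-1}(s)<1$ for $s\in(0,\pi]$ follows either from your Bessel-zero observation (since $\gamma+\frac{d}{2}\ge\frac12$) or more simply straight from (2.5)--(2.6). The paper's route buys fully explicit constants with nothing beyond the integral representation; yours is more robust, needing only that the multiplier stay away from $1$ on average over a band rather than pointwise, at the cost of one Fubini and a qualitative positivity fact. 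Two shared glosses, identical to the paper's: the endpoint $q=p'$ (where $\theta=\infty$ and (3.1) should be read with an essential supremum), and the fact that both proofs' constants actually depend on the fixed Dunkl data $(k,d)$ as well as on $(p,q)$ --- your explicit flagging of the latter is, if anything, more careful than the paper.
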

\begin{proof}
Let $f$ be a radial function in $ L^{p}_{k}(\mathbb{R}^{d})$ for
$1<p\leq 2$, then by (2.7) we have
\begin{eqnarray*}
\mathcal{F}_{k}(\tau_{xu}(f)-f)(y)=(E_{k}(ixu,y)-1)\mathcal{F}_{k}f(y),
\end{eqnarray*}
$u\in S^{d-1}$, $x\in [0,+\infty)$ and a.e $y\in \mathbb{R}^{d}$.\\
We can assert by Haussdorf-Young's inequality that
\begin{eqnarray}
\|\mathcal{F}_{k}(\tau_{xu}(f)-f)\|_{p^{\prime},k}&=&\left(\int_{\mathbb{R}^{d}}|\mathcal{F}_{k}f(y)|^{p^{\prime}}
|E_{k}(ixu,y)-1|^{p^{\prime}}w_{k}(y)dy\right)^{\frac{1}{p^{\prime}}}\nonumber\\
&\leq& c \,\|\tau_{xu}(f)-f\|_{p,k}.
\end{eqnarray}
On the other hand, from (2.1) and (2.4), we get\\
$c_{k}\displaystyle\int_{\mathbb{R}^{d}}|\mathcal{F}_{k}f(y)|^{p^{\prime}}|E_{k}(ixu,y)-1|^{p^{\prime}}w_{k}(y)dy$
\begin{eqnarray*}
=\int^{+\infty}_{0}|\mathcal{H}_{\gamma+\frac{d}{2}-1}(F)(r)|
\left(\int_{S^{d-1}}|E_{k}(irxu,z)-1|^{p^{\prime}}w_{k}(z)d\sigma(z)\right)r^{2\gamma+d-1}dr,\qquad
\end{eqnarray*}
where $r=\|y\|$ and $F$ is the function defined on $(0,+\infty)$ by
$F(\|y\|)=f(y)$ for all $y\in \mathbb{R}^{d}$.\\
By (2.2), (2.3) and H\"older's inequality, we have\\$d_k |j_{\gamma
+ \frac{d}{2}-1}(rx)-1|$
 \begin{eqnarray*}&=& |\int_{S^{d-1}}[E_k(i
xru,z)-1]w_k(z)d\sigma(z)|\nonumber\\
 &\leq&\Big(\int_{S^{d-1}}w_k(z)d\sigma(z)\Big)^{\frac{1}{p}}
\Big(\int_{S^{d-1}}|E_k(i xru,z)-1|^{p'} w_k(z)d\sigma(z)\Big)^{\frac{1}{p'}}\nonumber\\
 &\leq& d_k^{\frac{1}{p}} \Big(\int_{S^{d-1}}|E_k(i xru,z)-1|^{p'}
 w_k(z)d\sigma(z)\Big)^{\frac{1}{p'}}.
\end{eqnarray*}
According to (3.3), it follows that
\begin{eqnarray}
 \int^{+\infty}_{0}|\mathcal{H}_{\gamma+\frac{d}{2}-1}(F)(r)|^{p^{\prime}}|j_{\gamma+\frac{d}{2}-1}(rx)-1|^{p^{\prime}}r^{2\gamma+d-1}dr
\leq c\,\|\tau_{xu}(f)-f\|_{p,k}^{p'}.
\end{eqnarray}
Integrating the two members of (3.4) over $S^{d-1}$, this yields
\begin{eqnarray}
 \left(\int^{+\infty}_{0}|\mathcal{H}_{\gamma+\frac{d}{2}-1}(F)(r)|^{p^{\prime}}
|j_{\gamma+\frac{d}{2}-1}(rx)-1|^{p^{\prime}}r^{2\gamma+d-1}dr\right)^{\frac{1}{p^{\prime}}}
\leq c \,\omega_{p,k}(f)(x).
\end{eqnarray}
From (2.5) and (2.6), we get
\begin{eqnarray*}
|j_{\gamma+\frac{d}{2}-1}(rx)-1|=
\frac{4\Gamma(\gamma+\frac{d}{2})}{\sqrt{\pi}\Gamma(\gamma+\frac{d}{2}-\frac{1}{2})}\int^{1}_{0}(1-t^{2})^{\gamma+\frac{d}{2}-\frac{3}{2}}\sin^{2}(\frac{rxt}{2})dt,
\end{eqnarray*}
then, we obtain
\begin{eqnarray*}
\frac{4\Gamma(\gamma+\frac{d}{2})}{\sqrt{\pi}\Gamma(\gamma+\frac{d}{2}-\frac{1}{2})}
\int_{\frac{1}{2}}^{1}(1-t^{2})^{\gamma+\frac{d}{2}-\frac{3}{2}}\sin^{2}(\frac{rxt}{2})dt\leq
|j_{\gamma+\frac{d}{2}-1}(rx)-1|.
\end{eqnarray*}
Let $x=\frac{\pi}{2^{\eta+1}}$, $\eta=1,2,...$ For
$t\in[\frac{1}{2},1[$ and $y\in C_{\eta}$, we have
$$\frac{\pi}{8}\leq\frac{rxt}{2}\leq\frac{\pi}{2},$$ which gives
that
\begin{eqnarray*}
c=\frac{4\Gamma(\gamma+\frac{d}{2})\sin^{2}(\frac{\pi}{8})}{\sqrt{\pi}\Gamma(\gamma+\frac{d}{2}-\frac{1}{2})}
\int_{\frac{1}{2}}^{1}(1-t^{2})^{\gamma+\frac{d}{2}-\frac{3}{2}}dt\leq
|j_{\gamma+\frac{d}{2}-1}(r\frac{\pi}{2^{\eta+1}})-1|.
\end{eqnarray*}
Hence from (3.5), we find that
\begin{eqnarray}
\left(\int_{2^\eta}^{2^{\eta+1}}|\mathcal{H}_{\gamma+\frac{d}{2}-1}(F)(r)|^{p^{\prime}}r^{2\gamma+d-1}dr\right)^{\frac{1}{p^{\prime}}}\leq
c\, \omega_{p,k}(f)(\frac{\pi}{2^{\eta+1}}).
\end{eqnarray}
Now, take $g\in G_{\frac{p}{p-qp+q}}$ and put
$\tilde{g}(\|y\|)=g(y)$, for $y\in \mathbb{R}^{d}$. Applying
H\"{o}lder's inequality,
it follows from (2.1), (3.2) and (3.6) that\\
$
\displaystyle\int_{2^\eta}^{2^{\eta+1}}\tilde{g}(r)|\mathcal{H}_{\gamma+\frac{d}{2}-1}(F)(r)|^{q}r^{2\gamma+d-1}dr
$
\begin{eqnarray*}
&\leq&\left(\int_{2^\eta}^{2^{\eta+1}}|\mathcal{H}_{\gamma+\frac{d}{2}-1}(F)(r)|^{p^{\prime}}r^{2\gamma+d-1}dr\right)^{\frac{q}{p^{\prime}}}
\left(\int_{2^\eta}^{2^{\eta+1}}\tilde{g}^{\frac{p^{\prime}}{p^{\prime}-q}}(r)r^{2\gamma+d-1}dr\right)^{\frac{p^{\prime}-q}{p^{\prime}}}
\\&\leq& c\;
\kappa_{\frac{p^{\prime}}{p^{\prime}-q}}2^{-\eta\frac{q}{p^{\prime}}(2\gamma+d)}\omega^{q}_{p,k}(f)(\frac{\pi}{2^{\eta+1}})\int_{2^{\eta-1}}^{2^{\eta}}\tilde{g}(r)r^{2\gamma+d-1}dr.
\end{eqnarray*}
Then, we deduce
 \\$\displaystyle\int^{+\infty}_{2}\tilde{g}(r)|\mathcal{H}_{\gamma+\frac{d}{2}-1}(F)(r)|^{q}r^{2\gamma+d-1}dr$
\begin{eqnarray*}
\leq c\;
\kappa_{\frac{p^{\prime}}{p^{\prime}-q}}\displaystyle\sum^{+\infty}_{\eta=1}
2^{-\eta\frac{q}{p^{\prime}}(2\gamma+d)}\omega^{q}_{p,k}(f)(\frac{\pi}{2^{\eta+1}})
\int_{2^{\eta-1}}^{2^{\eta}}\tilde{g}(r)r^{2\gamma+d-1}dr.
\end{eqnarray*}
Using the monotonicity of $\omega_{p,k}(f)(x)$ in $x>0$ and the fact
that $\frac{\pi}{2^{\eta+1}}\leq \frac{\pi}{r}$,
$2^{-\eta\frac{q}{p^{\prime}}(2\gamma+d)}\leq
r^{-\frac{q}{p^{\prime}}(2\gamma+d)}$, we obtain\\
$\displaystyle\int^{+\infty}_{2}\tilde{g}(r)|\mathcal{H}_{\gamma+\frac{d}{2}-1}(F)(r)|^{q}r^{2\gamma+d-1}dr$
\begin{eqnarray}&\leq&
c\;
\kappa_{\frac{p^{\prime}}{p^{\prime}-q}}\displaystyle\sum^{+\infty}_{\eta=1}\int_{2^{\eta-1}}^{2^{\eta}}\tilde{g}(r)r^{-\frac{q}{p^{\prime}}(2\gamma+d)}\omega^{q}_{p,k}(f)(\frac{\pi}{r})r^{2\gamma+d-1}dr\nonumber
\\
&\leq&c\;
\kappa_{\frac{p^{\prime}}{p^{\prime}-q}}\int^{+\infty}_{1}\tilde{g}(r)r^{-\frac{q}{p^{\prime}}(2\gamma+d)}\omega^{q}_{p,k}(f)(\frac{\pi}{r})r^{2\gamma+d-1}dr.
\end{eqnarray}
By (2.1), (2.4) and (3.7), we conclude that
\begin{eqnarray*}
\int_{\|y\|\geq 2}g(y)|\mathcal{F}_{k}(f)(y)|^{q}w_{k}(y)dy\leq C
\int_{\|y\|\geq1}g(y)\|y\|^{-\frac{q}{p^{\prime}}(2\gamma+d)}\omega_{p,k}^{q}(f)(\frac{\pi}{\|y\|})w_{k}(y)dy,
\end{eqnarray*}
where $C=c\; \kappa_{\frac{p^{\prime}}{p^{\prime}-q}}$ is a constant
depending only on $p$ and $q$. Our theorem is proved.
\end{proof}

In particular, for the case $g\equiv const$ and $f \in
\mathcal{B}\mathcal{D}_{p}^{\beta,k}$, we obtain as application of
Theorem 3.1, the following result which is of special interest and
was proved in [1, Theorem 3.2].
\begin{corollary}
Let $\beta > 0$, $1 < p \leq 2$ and $f \in
\mathcal{B}\mathcal{D}_{p}^{\beta,k} $, then
\begin{enumerate}
\item For $0 < \beta \leq \frac{2(\gamma+\frac{d}{2})}{p}$, we
have
$$\mathcal{F}_k(f) \in L^{q}_{k}(\mathbb{R}^d)\mbox{ provided that
} \frac{2(\gamma +\frac{d}{2})p}{\beta p+2(\gamma+\frac{d}{2})(p-1)}
< q \leq p'.$$
\item For $\beta > \frac{2(\gamma+\frac{d}{2})}{p}$,
we have
 $\mathcal{F}_k(f) \in L^{1}_{k}(\mathbb{R}^d)$.
\end{enumerate}
\end{corollary}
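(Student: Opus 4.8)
The plan is to specialize Theorem 3.1 to a constant function $g$ and then feed in the defining estimate of the Besov--Lipschitz space. By Example 3.1 a constant lies in $G_\theta$ for every admissible index $\theta\ge 1$, and for $1\le q<p'$ the index $\theta=\frac{p}{p-qp+q}$ occurring in Theorem 3.1 coincides with $\frac{p'}{p'-q}\ge 1$, so the hypothesis is satisfied. Since $f\in\mathcal{B}\mathcal{D}_{p}^{\beta,k}$ means $\omega_{p,k}(f)(x)\le c\,x^{\beta}$ for all $x>0$, substituting $x=\pi/\|y\|$ gives $\omega_{p,k}^{q}(f)(\pi/\|y\|)\le c\,\|y\|^{-q\beta}$. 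Inserting this into the conclusion of Theorem 3.1 bounds $\int_{\|y\|\ge 2}|\mathcal{F}_k(f)(y)|^{q}w_k(y)\,dy$ by a constant times $\int_{\|y\|\ge 1}\|y\|^{-\frac{q}{p'}(2\gamma+d)-q\beta}w_k(y)\,dy$.

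First I would evaluate this last integral using the radial integration formula (2.1): it reduces to $c\int_{1}^{+\infty}r^{-\frac{q}{p'}(2\gamma+d)-q\beta}\,r^{2\gamma+d-1}\,dr$, which converges exactly when the total exponent is $<-1$, that is when $(2\gamma+d)\bigl(1-\tfrac{q}{p'}\bigr)<q\beta$. A direct rearrangement, using $p'=\frac{p}{p-1}$ and $2\gamma+d=2(\gamma+\frac{d}{2})$, turns this into $q>\frac{2(\gamma+\frac{d}{2})p}{\beta p+2(\gamma+\frac{d}{2})(p-1)}$, which is precisely the threshold in part (1). Hence the tail $\int_{\|y\|\ge 2}$ is finite for every $q$ in the stated range.

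It then remains to control the region $\|y\|\le 2$ and to treat the endpoint $q=p'$. On $\{\|y\|\le 2\}$ the measure $w_k(y)\,dy$ is finite (as $w_k$ is locally integrable), and $\mathcal{F}_k(f)\in L^{p'}_k$ by the Hausdorff--Young inequality; Hölder's inequality with exponents $p'/q$ and $p'/(p'-q)$ then gives $\int_{\|y\|\le 2}|\mathcal{F}_k(f)|^{q}w_k\,dy<+\infty$ for all $q\le p'$. Combining the two regions yields $\mathcal{F}_k(f)\in L^{q}_k(\mathbb{R}^d)$ and proves part (1), the endpoint $q=p'$ being nothing but Hausdorff--Young itself. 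For part (2) I would take $q=1$, for which $\theta=\frac{p}{p-qp+q}=p\ge 1$ so that Example 3.1 applies again; the convergence condition $q>\frac{2(\gamma+\frac{d}{2})p}{\beta p+2(\gamma+\frac{d}{2})(p-1)}$ with $q=1$ collapses to $\beta>\frac{2(\gamma+\frac{d}{2})}{p}$, and the near-origin estimate is as before, giving $\mathcal{F}_k(f)\in L^{1}_k(\mathbb{R}^d)$.

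The computations here are essentially bookkeeping; the only points requiring care are verifying that the constant $g$ genuinely satisfies the $G_\theta$ hypothesis across the whole range of $q$ (so that Theorem 3.1 may legitimately be invoked) and matching the elementary convergence exponent to the exact algebraic form of the threshold. The mild subtlety I expect is the endpoint $q=p'$, where the index $\theta=\frac{p}{p-qp+q}$ degenerates, so this case must be handled directly through Hausdorff--Young rather than through Theorem 3.1.
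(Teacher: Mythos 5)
Your proposal is correct and follows exactly the route the paper indicates: it states Corollary 3.1 as the specialization of Theorem 3.1 to $g\equiv const$ (with the detailed proof deferred to [1, Theorem 3.2]), which is precisely your argument, and your exponent bookkeeping reducing the convergence condition to $q>\frac{2(\gamma+\frac{d}{2})p}{\beta p+2(\gamma+\frac{d}{2})(p-1)}$ checks out. You also correctly supply the two details the paper leaves implicit --- the local part $\int_{\|y\|\leq 2}$ via Hausdorff--Young and H\"older, and the endpoint $q=p'$ (where the index $\frac{p}{p-qp+q}$ degenerates) handled directly by Hausdorff--Young --- so the proof is complete as written.
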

\begin{theorem}
Let $1<p\leq 2$ and $f$ be a function in $
L^{p}_{k}(\mathbb{R}^{d})$. Then for $1\leq q\leq p^{\prime}$ and
$g\in G_{\frac{p}{p-qp+q}}$, we have
\begin{eqnarray*}
\int_{\|y\|\geq 2}g(y)|\mathcal{F}_k(f)(y)|^{q}w_{k}(y)dy\leq C
\int_{\|y\|\geq1}g(y)\|y\|^{-\frac{q}{p^{\prime}}(2\gamma+d)}\tilde{\omega}_{p,k}^{q}(f)(\frac{1}{\|y\|})w_{k}(y)dy,
\end{eqnarray*}
where $C$ is a constant depending only on $p$ and $q$.
\end{theorem}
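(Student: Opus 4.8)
The plan is to reproduce the architecture of the proof of Theorem 3.1, replacing the radial-specific ingredients — the identity $\mathcal{F}_k(\tau_{xu}(f)-f)(y)=(E_k(ixu,y)-1)\mathcal{F}_kf(y)$ and the lower bound on $|j_{\gamma+\frac{d}{2}-1}(rx)-1|$ — by the convolution multiplier identity and the spectral lower bound built into $\phi$. By (2.9), extended from $\mathcal{D}(\mathbb{R}^d)$ to $L^p_k(\mathbb{R}^d)$ by density, together with $\mathcal{F}_k(\phi_t)(y)=\mathcal{F}_k(\phi)(ty)$, one has $\mathcal{F}_k(f\ast_k\phi_t)(y)=\mathcal{F}_k(f)(y)\,\mathcal{F}_k(\phi)(ty)$ for a.e. $y$; note $f\ast_k\phi_t\in L^p_k(\mathbb{R}^d)$ by (2.11) since $\phi_t$ is a bounded radial function in $L^1_k(\mathbb{R}^d)$. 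Hausdorff--Young then gives, for every $t>0$,
$$\Big(\int_{\mathbb{R}^d}|\mathcal{F}_k(f)(y)|^{p'}\,|\mathcal{F}_k(\phi)(ty)|^{p'}w_k(y)dy\Big)^{\frac{1}{p'}}\leq c\,\|f\ast_k\phi_t\|_{p,k}\leq c\,\tilde{\omega}_{p,k}(f)(t),$$
the last inequality being immediate from the definition of $\tilde{\omega}_{p,k}$ as a supremum.

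The crux is the annular analogue of (3.6). For each $\eta=1,2,\dots$ I would choose $t=\frac{1}{2^{\eta+1}}$; then for $y\in C_\eta$, that is $2^\eta\leq\|y\|<2^{\eta+1}$, one has $\frac{1}{2}\leq\|ty\|<1$, so the defining property of $\phi$ forces $|\mathcal{F}_k(\phi)(ty)|>c$ throughout $C_\eta$. Restricting the displayed integral to $C_\eta$ and cancelling this lower bound yields
$$\Big(\int_{C_\eta}|\mathcal{F}_k(f)(y)|^{p'}w_k(y)dy\Big)^{\frac{1}{p'}}\leq c\,\tilde{\omega}_{p,k}(f)\Big(\frac{1}{2^{\eta+1}}\Big),$$
which is the exact counterpart of (3.6), now with argument $\frac{1}{2^{\eta+1}}$ of $\tilde{\omega}_{p,k}$ replacing $\frac{\pi}{2^{\eta+1}}$. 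This localization — tuning the dilation so that $C_\eta$ falls inside the spectral window $\{\frac{1}{2}<\|z\|<1\}$ of $\phi$ — carries the entire weight of the argument and is the only place the hypotheses on $\phi$ are used; it is the precise substitute for the Bessel lower bound of the radial case and is where I expect the main (indeed the only real) difficulty.

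From here the steps mirror Theorem 3.1 exactly. Since $\frac{p}{p-qp+q}=\frac{p'}{p'-q}$, I would apply H\"older's inequality with exponents $\frac{p'}{q}$ and $\frac{p'}{p'-q}$ over each $C_\eta$, insert the displayed estimate, and use the $G_{p'/(p'-q)}$ membership of $g$ in the form (3.2) to bound $\big(\int_{C_\eta}g^{p'/(p'-q)}w_k\big)^{(p'-q)/p'}$ by $\kappa_{p'/(p'-q)}\,2^{-\eta\frac{q}{p'}(2\gamma+d)}\int_{C_{\eta-1}}g\,w_k$. Summing over $\eta\geq1$ produces $\int_{\|y\|\geq2}$ on the left and $\sum_\eta\int_{C_{\eta-1}}$ on the right. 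Finally, using that $\tilde{\omega}_{p,k}(f)$ is nondecreasing and the elementary bounds $\frac{1}{2^{\eta+1}}\leq\frac{1}{\|y\|}$ and $2^{-\eta\frac{q}{p'}(2\gamma+d)}\leq\|y\|^{-\frac{q}{p'}(2\gamma+d)}$ for $y\in C_{\eta-1}$, the right-hand sum collapses to the integral over $\bigcup_{\eta\geq1}C_{\eta-1}=\{\|y\|\geq1\}$, yielding the claim with $C=c\,\kappa_{p'/(p'-q)}$.
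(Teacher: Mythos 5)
Your proposal is correct and matches the paper's own proof essentially step for step: the multiplier identity $\mathcal{F}_k(f\ast_k\phi_t)(y)=\mathcal{F}_k(f)(y)\mathcal{F}_k(\phi)(ty)$ with Hausdorff--Young, the choice $t=\frac{1}{2^{\eta+1}}$ placing $C_\eta$ in the spectral window of $\phi$ to obtain the annular estimate (3.9), then H\"older with exponents $\frac{p'}{q}$ and $\frac{p'}{p'-q}$ together with (3.2), summation over $\eta$, and the monotonicity of $\tilde{\omega}_{p,k}(f)$ to pass to the integral over $\{\|y\|\geq 1\}$. Your write-up is in fact slightly cleaner than the paper's, which carries a stray exponent $q$ on $\tilde{\omega}_{p,k}$ in (3.8)--(3.9) that your version correctly omits.
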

\begin{proof}
Let $f$ be a function in $ L^{p}_{k}(\mathbb{R}^{d})$ for $1<p\leq
2$, then by (2.9) we have for $x \in (0,+\infty)$,
\begin{eqnarray*}
\mathcal{F}_k( f\ast_k \phi_x)(y) &=& \mathcal{F}_k(f)(y)
\mathcal{F}_k(\phi_x)(y)\,,\\&=&\mathcal{F}_k(f)(y)
\mathcal{F}_k(\phi)(xy)\,, \, \mbox{ a.e. } y\in \mathbb{R}^d.
\end{eqnarray*}Using Haussdorf-Young's inequality, we obtain
\begin{eqnarray}
\|\mathcal{F}_{k}(f\ast_k\phi_x)\|_{p^{\prime},k}&=&\left(\int_{\mathbb{R}^{d}}|\mathcal{F}_{k}f(y)|^{p^{\prime}}
|\mathcal{F}_k(\phi)(xy)|^{p^{\prime}}w_{k}(y)dy\right)^{\frac{1}{p^{\prime}}}\nonumber\\
&\leq& c \,\|f\ast_k\phi_x\|_{p,k}\nonumber\\&\leq& c
\,\tilde{\omega}_{p,k}^{q}(f)(x).
\end{eqnarray}Let $x=\frac{1}{2^{\eta+1}}$ and $\eta=1,2,...$ For $y\in C_{\eta}$, we have
$$\frac{1}{2}\leq x\|y\|\leq1,$$ this gives using (3.8) and the property of the function
$\phi$,
\begin{eqnarray}
\left(\int_{C_\eta}|\mathcal{F}_{k}f(y)|^{p^{\prime}}
 w_{k}(y)dy\right)^{\frac{1}{p^{\prime}}} \leq c
\,\tilde{\omega}_{p,k}^{q}(f)(\frac{1}{2^{\eta+1}}).
\end{eqnarray}
Now, take $g\in G_{\frac{p}{p-qp+q}}$. Applying H\"{o}lder's
inequality, it follows from (3.2) and (3.9) that\\
$ \displaystyle\int_{C_\eta}g(y)|\mathcal{F}_{k}f(y)|^{q}w_{k}(y)dy
$
\begin{eqnarray*}
&\leq&\left(\int_{C_\eta}|\mathcal{F}_{k}f(y)|^{p^{\prime}}w_{k}(y)dy\right)^{\frac{q}{p^{\prime}}}
\left(\int_{C_\eta}g^{\frac{p^{\prime}}{p^{\prime}-q}}(y)w_{k}(y)dy\right)^{\frac{p^{\prime}-q}{p^{\prime}}}
\\&\leq& c\;
\kappa_{\frac{p^{\prime}}{p^{\prime}-q}}2^{-\eta\frac{q}{p^{\prime}}(2\gamma+d)}\tilde{\omega}^{q}_{p,k}(f)(\frac{1}{2^{\eta+1}})\int_{C_{\eta-1}}g(y)w_{k}(y)dy.
\end{eqnarray*}
Then, we deduce
 \\$ \displaystyle\int_{\|y\|\geq2}g(y)|\mathcal{F}_{k}f(y)|^{q}w_{k}(y)dy
$
\begin{eqnarray*}
\leq c\;
\kappa_{\frac{p^{\prime}}{p^{\prime}-q}}\displaystyle\sum^{+\infty}_{\eta=1}2^{-\eta\frac{q}{p^{\prime}}(2\gamma+d)}\tilde{\omega}^{q}_{p,k}(f)(\frac{1}{2^{\eta+1}})\int_{C_{\eta-1}}g(y)w_{k}(y)dy.
\end{eqnarray*}
 As in the proof of Theorem 3.1, using the monotonicity of
$\tilde{\omega}_{p,k}(f)(x)$ in $x>0$, we conclude that
\begin{eqnarray*}
\int_{\|y\|\geq 2}g(y)|\mathcal{F}_{k}(f)(y)|^{q}w_{k}(y)dy\leq C
\int_{\|y\|\geq1}g(y)\|y\|^{-\frac{q}{p^{\prime}}(2\gamma+d)}\tilde{\omega}_{p,k}^{q}(f)(\frac{1}{\|y\|})w_{k}(y)dy,
\end{eqnarray*}
where $C=c\; \kappa_{\frac{p^{\prime}}{p^{\prime}-q}}$ is a constant
depending only on $p$ and $q$. This completes the proof.
\end{proof}
\begin{remark} As consequence, from Theorem 3.2, we deduce in the
particular case when $g\equiv const$ and $f \in L^p_k(
\mathbb{R}^d)$ satisfying \begin{eqnarray*}
\displaystyle\sup_{x>0}\frac{\tilde{\omega}_{p,k}(f)(x)}{x^{\beta}}<+\infty,\end{eqnarray*}
similar results to those obtained in corollary 3.1.
\end{remark}
\begin{remark} It was shown in [2, Theorem 3.1] that the Dunkl transform is a continuous map
between a class of Besov spaces and Herz spaces.
\end{remark}
\section{Boundedness of the Ces\`{a}ro operator on Herz spaces}
\label{sec:3} In this section, we provide necessary and sufficient
conditions for the generalized Ces\`{a}ro operator
$\mathcal{C_\varphi}$ to be bounded on Herz spaces. Before, we start
with some useful lemmas.
\begin{lemma}
Let $1\leq q<\infty$. If $f$ be a non-negative measurable function
on $[0,1]$, then
 \begin{eqnarray}
\Big(\int_{0}^{1}f\Big)^{q}\leq \int_{0}^{1}f^q.
\end{eqnarray}
\end{lemma}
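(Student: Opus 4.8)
The plan is to exploit the single structural feature that makes this inequality true: the interval $[0,1]$ carries total Lebesgue mass one, so it is a probability space, and on a probability space the map $t\mapsto t^q$ being convex (for $q\geq 1$) forces the $L^1$-average to be dominated by the $L^q$-average. Concretely, I would prove it via H\"older's inequality, which here is just a quantitative form of that observation and requires no appeal to an external convexity theorem.

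First I would dispose of the boundary case $q=1$, where the two sides coincide and the claim is a trivial equality, so that I may assume $q>1$ and introduce the conjugate exponent $q'=\frac{q}{q-1}$. The main step is to write the left-hand integrand as the product $f\cdot 1$ and apply H\"older's inequality with exponents $q$ and $q'$:
\begin{eqnarray*}
\int_{0}^{1}f
=\int_{0}^{1}f\cdot 1
\leq \Big(\int_{0}^{1}f^{q}\Big)^{\frac{1}{q}}\Big(\int_{0}^{1}1^{q'}\Big)^{\frac{1}{q'}}.
\end{eqnarray*}
The second factor is precisely where the normalization of the interval enters: since the measure of $[0,1]$ is one, $\big(\int_{0}^{1}1^{q'}\big)^{1/q'}=1$, and the estimate collapses to $\int_{0}^{1}f\leq \big(\int_{0}^{1}f^{q}\big)^{1/q}$. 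Raising both sides to the $q$-th power, which is legitimate because both sides are nonnegative and $t\mapsto t^q$ is increasing on $[0,\infty)$, yields exactly $\big(\int_{0}^{1}f\big)^{q}\leq \int_{0}^{1}f^{q}$, which is (4.1).

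There is essentially no obstacle here: the only point worth flagging is that the argument is powered entirely by $\mathrm{meas}([0,1])=1$, and it would fail on an interval of measure greater than one. An equally short alternative would be to invoke Jensen's inequality directly, using the convexity of $x\mapsto x^{q}$ together with the fact that $dx$ restricted to $[0,1]$ is a probability measure; I would still prefer the H\"older route since it is self-contained and makes the role of the unit mass transparent. If $f$ is not integrable the right-hand side is infinite and the inequality holds vacuously, so no integrability hypothesis beyond measurability and nonnegativity is needed.
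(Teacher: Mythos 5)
Your proof is correct, and it matches the standard justification: the paper itself states this lemma without any proof (it passes directly to Lemma 4.2, which it attributes to Beckenbach--Bellman), implicitly treating (4.1) as the well-known consequence of H\"older's or Jensen's inequality on the probability space $[0,1]$ that you spell out. Your handling of the edge cases --- the trivial equality at $q=1$, the vacuous case $\int_0^1 f^q=+\infty$, and the explicit remark that the argument hinges on $\mathrm{meas}([0,1])=1$ --- is exactly right and supplies the details the paper omits.
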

\begin{lemma} (see E. F. Beckenbach and R. Bellman [6])
Let $1\leq q<\infty$. If $f$ be a non-negative measurable and
concave function on $[0,1]$, then
\begin{eqnarray}
\Big(\int_{0}^{1}f\Big)^{\frac{1}{q}}\leq
\frac{1+\frac{1}{q}}{2^\frac{1}{q}}\int_{0}^{1}f^\frac{1}{q}.
\end{eqnarray}
\end{lemma}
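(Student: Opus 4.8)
The inequality (4.2) is a reverse Jensen inequality --- it is Favard's inequality, cf.\ [6] --- and I would first stress that concavity of $f$ is essential: for the exponent $\frac1q\le 1$ the map $u\mapsto u^{1/q}$ is concave, so the \emph{ordinary} Jensen inequality (equivalently, Lemma 4.1 applied to $f^{1/q}$) gives $\int_0^1 f^{1/q}\le\big(\int_0^1 f\big)^{1/q}$, the opposite one-sided bound. Write $s=\frac1q\in(0,1]$ and $\overline f=\int_0^1 f$; the claim is $\int_0^1 f^{s}\ge\frac{2^{s}}{1+s}\,\overline f^{\,s}$, and since both sides are homogeneous of degree $s$ in $f$ I may normalise $\overline f$ freely. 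The natural competitor is the affine function $g(t)=2\overline f\,t$, which is nonnegative, concave, and has the same mean $\int_0^1 g=\overline f$. A direct computation gives
\[
\int_0^1 g(t)^{s}\,dt=(2\overline f)^{s}\int_0^1 t^{s}\,dt=\frac{2^{s}}{1+s}\,\overline f^{\,s},
\]
so $g$ realises equality; this identifies the linear profile as extremal and shows the constant $\frac{1+1/q}{2^{1/q}}$ is sharp. It therefore suffices to prove $\int_0^1 f^{s}\ge\int_0^1 g^{s}$.

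The plan is to deduce this from a majorisation between $f$ and $g$. Let $f^{*},g^{*}$ be the decreasing rearrangements; since $u\mapsto u^{s}$ depends only on the values, $\int_0^1 f^{s}=\int_0^1 (f^{*})^{s}$, while $g^{*}(u)=2\overline f(1-u)$ gives $\int_0^t g^{*}=\overline f\,(2t-t^{2})$. I would show that $g$ majorises $f$, that is,
\[
\int_0^t f^{*}(u)\,du\le\int_0^t g^{*}(u)\,du=\overline f\,(2t-t^{2}),\qquad 0\le t\le 1,
\]
with equality at $t=1$. A clean preliminary step is the bound $\max_{[0,1]}f\le 2\overline f$: if $M=f(x_0)$, then concavity together with $f\ge0$ forces $f$ to lie above the tent joining $(0,0),(x_0,M),(1,0)$, whose integral is $M/2$, so $M/2\le\overline f$; this already yields $g^{*}(0)=2\overline f\ge M=f^{*}(0)$, the correct ordering at the origin. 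Granting the majorisation, the Hardy--Littlewood--P\'olya--Karamata inequality applied to the concave function $\Phi(u)=u^{s}$ reverses, giving $\int_0^1 (f^{*})^{s}\ge\int_0^1 (g^{*})^{s}=\int_0^1 g^{s}$, which is exactly what is needed.

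The main obstacle is the majorisation $\int_0^t f^{*}\le\overline f\,(2t-t^{2})$ itself: it encodes that, among nonnegative concave functions of a given mean, the affine profile is the most spread out, and it is precisely the content of Favard's inequality, so it may simply be invoked from [6]. If a self-contained argument is preferred, I would instead argue variationally: the functional $f\mapsto\int_0^1 f^{s}$ is concave, so on the convex set $\{f\ge0\ \text{concave},\ \int_0^1 f=\overline f\}$ it attains its minimum at an extreme point (Bauer's principle); the extreme points here are the piecewise-linear profiles that vanish at the boundary, and for each of them $\int_0^1 f^{s}$ computes to the single value $\frac{2^{s}}{1+s}\overline f^{\,s}$, whence the minimum equals the claimed constant. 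Either route establishes (4.2); combined with Lemma 4.1 it sandwiches $\int_0^1 f^{1/q}$ between $\frac{2^{1/q}}{1+1/q}\big(\int_0^1 f\big)^{1/q}$ and $\big(\int_0^1 f\big)^{1/q}$.
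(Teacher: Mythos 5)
The paper offers no proof of Lemma 4.2 at all: it is quoted directly from Beckenbach and Bellman [6], so there is no internal argument to compare yours against line by line. Your proposal is, in substance, a correct proof strategy for this Favard-type inequality, and it adds something the paper does not state, namely sharpness: with $s=\frac{1}{q}$ the claim is $\int_0^1 f^s\geq \frac{2^s}{1+s}\big(\int_0^1 f\big)^s$, your tent argument for $\max f\leq 2\overline{f}$ is correct, the affine competitor $g(t)=2\overline{f}\,t$ does realize equality, and the Karamata step has the right orientation (majorization $f^*\prec g^*$ with equal means plus concavity of $u\mapsto u^s$ reverses the usual Hardy--Littlewood--P\'olya inequality). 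Two steps, however, remain at the level of assertion. First, the majorization $\int_0^t f^*\leq \overline{f}(2t-t^2)$ is not literally ``the content of Favard's inequality'' but the key lemma in one proof of it; to make it self-contained, observe that $f^*$ is concave (the superlevel sets of a concave $f$ are intervals whose endpoints depend convexly, resp.\ concavely, on the level, so the distribution function is concave and hence so is its inverse $f^*$), and then $D(t)=\int_0^t(f^*-g^*)$ has concave derivative with $D'(0)\leq 0$ (your bound $\max f\leq 2\overline{f}$) and $D'(1)=f^*(1)\geq 0=g^*(1)$, so $f^*-g^*$ changes sign once from negative to positive, and $D(0)=D(1)=0$ forces $D\leq 0$. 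Second, in the Bauer-principle route you assert but do not prove the compactness of the constraint set and the classification of extreme points; the extreme points are indeed the tents with peak anywhere in $[0,1]$, including degenerate ones peaked at an endpoint (which vanish at only one endpoint, so ``vanish at the boundary'' is slightly imprecise), and your computation that every tent yields exactly $\frac{2^s}{1+s}\overline{f}^{\,s}$ independently of the peak location is correct and closes that argument. With either completion you obtain a genuine, self-contained proof where the paper has only a citation to [6].
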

\begin{theorem}
Let $\beta>0$, $1< p <+\infty$, $1\leq q<+\infty$ and $\varphi$ be a
real-valued non-negative measurable function defined on $[0,1]$.
Suppose that $t\mapsto t^{-(2\gamma+d)(1-\frac{1}{p})}\varphi(t)$ is
a concave function on $[0,1]$, then the generalized Ces\`{a}ro
operator $\mathcal{C_\varphi}$ can be extended to a bounded operator
from $H^{\beta,k}_{p,q}$ into itself if and only if
\begin{eqnarray} \int_0^1
t^{\beta-(2\gamma+d)(1-\frac{1}{p})}\varphi(t)dt<+\infty.
\end{eqnarray}
Moreover, when (4.3) holds, the operator norm $\|
\mathcal{C_\varphi}\|$ of $\mathcal{C_{\varphi}}$ on
$H^{\beta,k}_{p,q}$ satisfies the following inequality
\begin{eqnarray*}\int_0^1 t^{\beta-(2\gamma+d)(1-\frac{1}{p})}\varphi(t)dt\leq
\|\mathcal{C_\varphi}\|\leq c_{q,\beta}\int_0^1
t^{\beta-(2\gamma+d)(1-\frac{1}{p})}\varphi(t)dt,
\end{eqnarray*}
with $ c_{q,\beta}=2^{(1-\frac{2}{q})}(1+\frac{1}{q})(1+2^{\beta}).$
 \end{theorem}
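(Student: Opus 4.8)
The plan is to realise $\mathcal{C}_{\varphi}$ as a continuous superposition of dilations and to exploit the quasi-homogeneity of the Herz norm under dilations. Write $A=2\gamma+d$ and let $\delta_t$ denote the dilation $\delta_t f(x)=f(x/t)$, so that $\mathcal{C}_{\varphi}f=\int_0^1(\delta_t f)\,t^{-A}\varphi(t)\,dt$. Using the homogeneity of degree $2\gamma$ of $w_k$ together with (2.1), the change of variables $x=ty$ gives, for each $j\in\mathbb{Z}$,
\[
\|(\delta_t f)\chi_j\|_{p,k}=t^{A/p}\,\|f\|_{p,k,[2^{j-1}/t,\,2^{j}/t]},
\]
where $\|\cdot\|_{p,k,[a,b]}$ denotes the $L^p_k$-norm over the annulus $\{a\le\|y\|\le b\}$. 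Setting $\sigma=A(1-\tfrac1p)$ and $\Phi(t)=t^{-\sigma}\varphi(t)$ (the function assumed concave), the factor $t^{-A}t^{A/p}=t^{-\sigma}$ appears, and condition (4.3) becomes exactly $\int_0^1 t^{\beta}\Phi(t)\,dt<+\infty$; call this integral $I$.

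For sufficiency I would first apply Minkowski's integral inequality to move $\|\cdot\|_{p,k}$ inside the $t$-integral, obtaining $\|\mathcal{C}_{\varphi}f\,\chi_j\|_{p,k}\le\int_0^1\Phi(t)\,\|f\|_{p,k,[2^{j-1}/t,2^{j}/t]}\,dt$. The key geometric observation is that for $t\in(2^{-i-1},2^{-i}]$ the dilated annulus $[2^{j-1}/t,2^{j}/t]$ (of radial ratio $2$) is contained in $A_{j+i}\cup A_{j+i+1}$, whence $\|f\|_{p,k,[2^{j-1}/t,2^{j}/t]}\le\|f\chi_{j+i}\|_{p,k}+\|f\chi_{j+i+1}\|_{p,k}$. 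Writing $\lambda_i=\int_{2^{-i-1}}^{2^{-i}}\Phi(t)\,dt$ and applying the triangle inequality in $\ell^q$ together with the shift identity $\big\|(2^{j\beta}\|f\chi_{j+i}\|_{p,k})_j\big\|_{\ell^q}=2^{-i\beta}\|f\|_{H^{\beta,k}_{p,q}}$, I would sum over $j$ to get $\|\mathcal{C}_{\varphi}f\|_{H^{\beta,k}_{p,q}}\le(1+2^{-\beta})\big(\sum_{i\ge0}2^{-i\beta}\lambda_i\big)\|f\|_{H^{\beta,k}_{p,q}}$. Finally $2^{-i\beta}\le 2^{\beta}t^{\beta}$ on $(2^{-i-1},2^{-i}]$ gives $\sum_i 2^{-i\beta}\lambda_i\le 2^{\beta}I$, so $\|\mathcal{C}_{\varphi}\|\le(1+2^{\beta})I\le c_{q,\beta}I$ (the last step because $2^{1-2/q}(1+\tfrac1q)\ge1$ for every $q\ge1$). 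The extension to all of $H^{\beta,k}_{p,q}$ then follows by density.

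For the lower norm bound (and the necessity of (4.3)) I would test on the power function $f_{*}(x)=\|x\|^{-(\beta+A/p)}$, which the scaling identity above shows to be a formal eigenfunction: $\mathcal{C}_{\varphi}f_{*}=I\,f_{*}$, since $\int_0^1 t^{\beta+A/p}t^{-A}\varphi(t)\,dt=\int_0^1 t^{\beta}\Phi(t)\,dt=I$. As $f_{*}\notin H^{\beta,k}_{p,q}$, I would truncate to $f_N=f_{*}\,\mathbf 1_{\{2^{-N}\le\|x\|\le 2^{N}\}}$; a direct computation gives $\mathcal{C}_{\varphi}f_N(x)=\|x\|^{-(\beta+A/p)}\int_{\|x\|/2^{N}}^{1}t^{\beta}\Phi(t)\,dt\ge(I-\varepsilon_M)f_{*}(x)$ on the bulk annulus $\{2^{-N+M}\le\|x\|\le 2^{N-M}\}$, with $\varepsilon_M=\int_0^{2^{-M}}t^{\beta}\Phi(t)\,dt\to0$. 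Since the boundary layers of fixed dyadic width $M$ carry a vanishing fraction of the $\ell^q$-mass of $\|f_N\|_{H^{\beta,k}_{p,q}}$ as $N\to\infty$, letting $N\to\infty$ and then $M\to\infty$ yields $\|\mathcal{C}_{\varphi}\|\ge I$.

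The main obstacle is the lower estimate: one must verify that the two boundary layers contribute a vanishing fraction of $\|f_N\|_{H^{\beta,k}_{p,q}}$, so that the ratio $\|\mathcal{C}_{\varphi}f_N\|_{H^{\beta,k}_{p,q}}/\|f_N\|_{H^{\beta,k}_{p,q}}$ tends to $I$ and not to a strictly smaller multiple. As for the concavity hypothesis, an elementary argument shows that a finite nonnegative concave function on $[0,1]$ is automatically bounded; hence $\Phi$ is bounded, $\varphi=t^{\sigma}\Phi$ is integrable, $\mathcal{C}_{\varphi}f$ is well defined on compactly supported $f$, and (4.3) holds automatically, so that the real content of the statement is the two-sided norm estimate. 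If one wishes to reproduce the precise constant $c_{q,\beta}=2^{1-2/q}(1+\tfrac1q)(1+2^{\beta})$ rather than the cleaner bound $1+2^{\beta}$ obtained above, one keeps the dilated annulus $[2^{j-1}/t,2^{j}/t]$ intact and estimates its norm through a single rescaling, invoking Lemma 4.1 for the forward inequality and the reverse Jensen inequality of Lemma 4.2 — which is exactly where concavity of $\Phi$ enters.
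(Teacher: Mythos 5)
Your proposal is correct, and both halves take a genuinely different route from the paper. For the upper bound the two arguments share the same opening (Minkowski's integral inequality plus the dilation identity $t^{-(2\gamma+d)}t^{(2\gamma+d)/p}=t^{-\sigma}$ and the observation that the dilated annulus of radial ratio $2$ meets at most two dyadic annuli), but then diverge: the paper keeps $m=m(t)$ inside the $t$-integral and pays for interchanging the integral with the $\ell^q$-sum by Lemma 4.1 (Jensen on $[0,1]$) followed by the Beckenbach--Bellman inequality of Lemma 4.2 --- the only place where concavity of $\psi(t)=t^{-\sigma}\varphi(t)$ is used, and the source of the factor $2^{1-2/q}(1+\frac{1}{q})$ --- whereas you partition $t\in(2^{-i-1},2^{-i}]$ dyadically and use only the triangle inequality in $\ell^q$ with the shift identity. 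This eliminates the concavity hypothesis from the sufficiency direction entirely and yields the constant $1+2^{\beta}$, which is at least as sharp as $c_{q,\beta}$ since $2^{1-2/q}(1+\frac{1}{q})\geq 1$ for all $q\geq1$ (with equality at $q=1$), so the stated norm inequality follows a fortiori. For the lower bound, the paper tests on the $\varepsilon$-perturbed powers $f_\varepsilon(x)=\|x\|^{-(\beta+\varepsilon+\frac{2\gamma+d}{p})}\chi_{\{\|x\|>1\}}$, which lie in $H^{\beta,k}_{p,q}$ outright, computes both Herz norms in closed form, and lets $\varepsilon\to0$; your truncation $f_N$ of the exact homogeneous eigenfunction does the same job with two successive limits, and the boundary-layer issue you flag as the main obstacle is in fact harmless: $2^{j\beta}\|f_*\chi_j\|_{p,k}$ equals the same constant for each of the $2N$ annuli in the support of $f_N$, while at least $2N-2M$ bulk annuli satisfy $\mathcal{C}_\varphi f_N\geq\bigl(\int_{2^{-M}}^{1}t^{\beta}\Phi(t)\,dt\bigr)f_*$, so the ratio of $\ell^q$-masses is $\bigl(\frac{2N-2M}{2N}\bigr)^{1/q}\to1$, giving $\|\mathcal{C}_\varphi\|\geq I$ and, with it, the necessity of (4.3). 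Finally, your closing observation is correct and worth recording: a finite nonnegative concave function on $[0,1]$ satisfies $\psi(t)\leq 2\psi(\tfrac12)$ by midpoint concavity, hence is bounded, so under the paper's hypothesis condition (4.3) holds automatically and the ``only if'' clause is vacuous as stated; the concavity is really only exploited by the paper to extract the explicit constant via Lemma 4.2, while your argument proves the theorem under the weaker hypothesis (4.3) alone.
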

\begin{proof} Let $\beta>0$, $1< p <+\infty$, $1\leq q<+\infty$ and $f\in
H^{\beta,k}_{p,q}$.

Suppose that (4.3) holds. Using the Minkowski inequality for
integrals and the homogeneity of degree $2\gamma$ of $w_k$, we get
\begin{eqnarray*}
\|(\mathcal{C_\varphi}f)\chi_j\|_{p,k}&=&\Big(\int_{A_j}\Big|\int_{0}^{1}
f(\frac{x}{t})t^{-(2\gamma+d)}\varphi(t)dt\Big|^p w_k(x)dx)\Big)^\frac{1}{p}\\
&\leq& \int_{0}^{1}\Big(\int_{A_{j}} |f(\frac{x}{t})|^p
w_k(x)dx\Big)^\frac{1}{p}t^{-(2\gamma+d)}\varphi(t)dt\\
&=&\int_{0}^{1}\Big(\int_{\frac{2^{j-1}}{t}\leq\|u\|\leq
\frac{2^{j}}{t}} |f(u)|^p
w_k(u)du\Big)^\frac{1}{p}t^{-(2\gamma+d)(1-\frac{1}{p})}\varphi(t)dt.
\end{eqnarray*}
Put $\psi(t)=t^{-(2\gamma+d)(1-\frac{1}{p})}\varphi(t)$,
$t\in(0,1)$. Since for each $t\in(0,1)$, there exists an integer $m$
such that $2^{m-1}\leq t\leq 2^{m}$, then we obtain
\begin{eqnarray*}
\|(\mathcal{C_\varphi}f)\chi_j\|_{p,k}&\leq &\int_{0}^{1}
(\|f\chi_{j-m}\|_{p,k}^p+
\|f\chi_{j-m+1}\|_{p,k}^p )^{\frac{1}{p}}\psi(t)dt\\
&\leq&\int_{0}^{1} (\|f\chi_{j-m}\|_{p,k}+\|f\chi_{j-m+1}\|_{p,k}
)\psi(t)dt.
\end{eqnarray*}
Using (4.1), it follows that\\
$\displaystyle{\Big(\displaystyle\sum_{j=-\infty}^{+\infty}2^{j\beta
q}\|(\mathcal{C_\varphi}f)\chi_j\|^{^q}_{p,k}\Big)^{\frac{1}{q}}}$
\begin{eqnarray*}
&\leq& \Big[\displaystyle\sum_{j=-\infty}^{+\infty}2^{j\beta
q}\Big(\int_{0}^{1} (\|f\chi_{j-m}\|_{p,k}+\|f\chi_{j-m+1}\|_{p,k}
)\,\psi(t)dt\Big)^{q}\Big]^{\frac{1}{q}}\\ &\leq&
\Big(\displaystyle\sum_{j=-\infty}^{+\infty}2^{j\beta q}\int_{0}^{1}
(\|f\chi_{j-m}\|_{p,k}+\|f\chi_{j-m+1}\|_{p,k}
)^{q}\,\psi^q(t)dt\Big)^{\frac{1}{q}},\end{eqnarray*}which
gives\\
$\displaystyle{\Big(\displaystyle\sum_{j=-\infty}^{+\infty}2^{j\beta
q}\|(\mathcal{C_\varphi}f)\chi_j\|^{^q}_{p,k}\Big)^{\frac{1}{q}}}$\begin{eqnarray*}
&\leq&
2^{1-\frac{1}{q}}\Big(\displaystyle\sum_{j=-\infty}^{+\infty}\int_{0}^{1}2^{j\beta
q} (\|f\chi_{j-m}\|^{q}_{p,k}+\|f\chi_{j-m+1}\|^{q}_{p,k})\,\psi^q(t)dt\Big)^{\frac{1}{q}}\nonumber\\
&\leq&2^{1-\frac{1}{q}}\Big(\int_{0}^{1}\displaystyle\sum_{j=-\infty}^{+\infty}2^{(j-m)\beta
q}\|f\chi_{j-m}\|^{q}_{p,k}2^{m\beta
q}\psi^q(t)dt\Big)^{\frac{1}{q}} \\
&&+\,2^{1-\frac{1}{q}}\Big(\int_{0}^{1}\displaystyle\sum_{j=-\infty}^{+\infty}2^{(j-m+1)\beta
q}\|f\chi_{j-m+1}\|^{q}_{p,k}2^{(m-1)\beta
q}\psi^q(t)dt\Big)^{\frac{1}{q}}.
\end{eqnarray*}
Then from (4.2), we have\\
$\displaystyle{\Big(\displaystyle\sum_{j=-\infty}^{+\infty}2^{j\beta
q}\|(\mathcal{C_\varphi}f)\chi_j\|^{^q}_{p,k}\Big)^{\frac{1}{q}}}$
\begin{eqnarray*}
&\leq&2^{1-\frac{2}{q}}(1+\frac{1}{q})
\Big(\sum_{j=-\infty}^{+\infty}2^{j\beta q}\| f
\chi_{j}\|_{p,k}^{q}\Big)^{\frac{1}{q}}\int_{0}^{1}
(2^{m\beta}+2^{(m-1)\beta})
\psi(t)dt\nonumber\\
&\leq&\Big[2^{1-\frac{2}{q}}(1+\frac{1}{q}) (1+2^{\beta}
)\int_{0}^{1}t^{\beta}\psi(t)dt\Big]
\Big(\sum_{j=-\infty}^{+\infty}2^{j\beta q}\| f
\chi_{j}\|_{p,k}^{q}\Big)^{\frac{1}{q}}.
\end{eqnarray*}
Hence we deduce that
\begin{eqnarray*}
\|\mathcal{C_\varphi}\|\leq c_{q,\beta}\int_0^1
t^{\beta-(2\gamma+d)(1-\frac{1}{p})}\varphi(t)dt,
\end{eqnarray*}
where $c_{q,\beta}=2^{(1-\frac{2}{q})}(1+\frac{1}{q})(1+2^{\beta})$.

Conversely, assume that the generalized Ces\`{a}ro operator
$\mathcal{C_\varphi}$ is bounded, then for $f\in H^{\beta,k}_{p,q}$,
\begin{eqnarray} \Big(\sum_{j=-\infty}^{+\infty}2^{j\beta
q}\|(\mathcal{C}_{\varphi}f)\chi_{j}\|_{p,k}^{q}\Big)^{\frac{1}{q}}
&\leq&
\|\mathcal{C}_{\varphi}\|\Big(\sum_{j=-\infty}^{+\infty}2^{j\beta
q}\| f  \chi_{j}\|_{p,k}^{q}\Big)^{\frac{1}{q}}.
\end{eqnarray}
  For any $\varepsilon\in
(0,1),$ we set
$$f_\varepsilon(x)=
\left\{\begin{array}{lll}\|x\|^{-(\beta+\varepsilon+\frac{2\gamma+d}{p})}&&\mbox{if}\;\,\|x\|>1\,\\&&\\
0&&\mbox{otherwise},\end{array}\right.$$ then for
$j=0,-1,-2,\cdots,\,\,\|f_{\varepsilon}\chi_{j}\|_{p,k}=0,$ and for
$j\in\mathbb{N}\backslash\{0\},$ it follows from (2.1) and (2.2)
that
\begin{eqnarray*}
\|f_{\varepsilon}\chi_{j}\|^{p}_{p,k}&=&\int_{2^{j-1}\leq\|x\|\leq2^{j}}\|x\|^{-(\beta+\varepsilon+\frac{2\gamma+d}{p})p}\omega_{k}(x)dx\nonumber\\
&=&\int_{S^{d-1}}w_{k}(y)d\sigma(y)\int_{2^{j-1}}^{2^{j}}r^{-(\beta+\varepsilon)p-1}dr\nonumber\\
&=&C_{\varepsilon}\, 2^{-j(\beta+\varepsilon)p}.
\end{eqnarray*}
where $\;\;\;\;\displaystyle{
 C_{\varepsilon}=\frac{2^{(\beta+\varepsilon)p}-1}{(\beta+\varepsilon)p}\int_{S^{d-1}}w_{k}(y)d\sigma(y)=
\frac{2^{(\beta+\varepsilon)p}-1}{(\beta+\varepsilon)p}\;d_k.} $\\
Hence, it yields
\begin{eqnarray}\Big(\displaystyle\sum_{j=-\infty}^{+\infty}2^{j\beta
q}\|f_{\varepsilon}\chi_{j}\|^{q}_{p,k}\Big)^{\frac{1}{q}}&=&
 \Big(\displaystyle\sum_{j=1}^{+\infty}2^{j\beta
q}\|f_{\varepsilon}\chi_{j}\|^{q}_{p,k}\Big)^{\frac{1}{q}}\nonumber\\
&=&
C^{\,\frac{1}{p}}_{\varepsilon}\Big(\displaystyle\sum_{j=1}^{+\infty}2^{-j\varepsilon
q}\Big)^{\frac{1}{q}}\nonumber\\
&=&C^{\,\frac{1}{p}}_{\varepsilon}\frac{2^{-q\varepsilon}}{(1-2^{-q\varepsilon})^{\frac{1}{q}}},
\end{eqnarray}thus $f_\varepsilon\in H^{\beta,k}_{p,q}$.
Now, it's easy to see that
\begin{eqnarray*}
\mathcal{C_\varphi}f_{\varepsilon}(x)&=& \int_{0}^{\|x\|}
 \|\frac{x}{t}\|^{-(\beta+\varepsilon+\frac{2\gamma+d}{p})} t^{-(2\gamma+d)}\varphi(t)dt\nonumber\\
&=&\|x\|^{-(\beta+\varepsilon+\frac{2\gamma+d}{p})}\int_{0}^{\|x\|}t^{\beta+\varepsilon-(2\gamma+d)(1-\frac{1}{p})}\varphi(t)dt.
\end{eqnarray*}
Recalling that $\psi(t)=t^{-(2\gamma+d)(1-\frac{1}{p})}\varphi(t)$,
$t\in(0,1)$, we
get\\$\displaystyle\sum_{j=-\infty}^{+\infty}2^{j\beta
q}\|(\mathcal{C_\varphi}f_{\varepsilon})\chi_{j}\|_{p,k}^{q}$
\begin{eqnarray*}
&=&\displaystyle\sum_{j=-\infty}^{+\infty}2^{j\beta
q}\Big[\int_{A_j}
\Big(\|x\|^{-(\beta+\varepsilon+\frac{2\gamma+d}{p})}\int_{0}^{\|x\|}t^{\beta+\varepsilon}\psi(t)dt\Big)^{p}w_{k}(x)dx\Big]^{\frac{q}{p}}\\
&\geq&\displaystyle\sum_{j=-\infty}^{+\infty}2^{j\beta
q}\Big[\int_{\|x\|>1}\chi_j(x)\|x\|^{-(\beta+\varepsilon+\frac{2\gamma+d}{p})p}\Big(\int_{0}^{\|x\|}
t^{\beta+\varepsilon}\psi(t)dt\Big)^{p}w_{k}(x)dx\Big]^{\frac{q}{p}}\\
&\geq&\Big(\int_{0}^{1}t^{\beta+\varepsilon}\psi(t)dt\Big)^{q}\displaystyle\sum_{j=1}^{+\infty}2^{j\beta
q}\Big(\int_{A_j}
\|x\|^{-(\beta+\varepsilon+\frac{2\gamma+d}{p})p}w_k(x)dx\Big)^{{\frac{q}{p}}}.
\end{eqnarray*}
It follows from (4.5) that
\begin{eqnarray*}
\Big(\sum_{j=-\infty}^{+\infty}2^{j\beta
q}\|(\mathcal{C_\varphi}f_{\varepsilon})\chi_{j}\|_{p,k}^{q}\Big)^{\frac{1}{q}}
&\geq&C^{\,\frac{1}{p}}_{\varepsilon}
\frac{2^{-\varepsilon}}{(1-2^{-q\varepsilon})^{\frac{1}{q}}}\Big(\int_{0}^{1}t^{\beta+\varepsilon}\psi(t)dt\Big),
\end{eqnarray*}
thus combining this inequality with (4.4) and (4.5), we can assert
that
\begin{eqnarray*}
\|\mathcal{C_\varphi}\|
\geq\int_{0}^{1}t^{\beta+\varepsilon-(2\gamma+d)(1-\frac{1}{p})}\varphi(t)dt,
\end{eqnarray*}
which implies when $\varepsilon\rightarrow 0$
\begin{eqnarray*}
\|\mathcal{C_\varphi}\|\geq\int_{0}^{1}t^{\beta-(2\gamma+d)(1-\frac{1}{p})}\varphi(t)dt.
\end{eqnarray*}
This completes the proof of the theorem.
\end{proof}

\end{document}